\newcounter{spec}
{\end{list}}
\renewcommand{\P}{{\mathbf P}}
\newcommand{\sX}{{\mathcal X}}
\newcommand{\Z}{{\mathbb Z}}
\newcommand{\Q}{{\mathbb Q}}
\newcommand{\C}{{\mathbb C}}
\newcommand{\R}{{\mathbb R}}
\newcommand{\Spec}{{\operatorname{Spec \ }}}
\renewcommand{\lim}{\varprojlim}
\def\ChaL{{Chatzistamatiou et Levine}}
\numberwithin{equation}{section}
\newfont{\gothic}{eufb10}
\newtheorem{theo}{Th\'{e}or\`{e}me}[section]
\newtheorem{prop}[theo]{Proposition}
\newtheorem{lem}[theo]{Lemme}
\newtheorem{cor}[theo]{Corollaire}
\theoremstyle{definition}
\newtheorem{defi}[theo]{D\'efinition}
\theoremstyle{remark}
\newtheorem{rema}[theo]{Remarque}
\newcommand{\bthe}{\begin{theo}}
\newcommand{\ble}{\begin{lem}}
\newcommand{\bpr}{\begin{prop}}
\newcommand{\bco}{\begin{cor}}
\newcommand{\bde}{\begin{defi}}
\newcommand{\ethe}{\end{theo}}
\newcommand{\ele}{\end{lem}}
\newcommand{\epr}{\end{prop}}
\newcommand{\eco}{\end{cor}}
\newcommand{\ede}{\end{defi}}
\newcommand{\F}{{\mathbb F}}
\newcommand{\A}{{\bf A}}
\DeclareFontFamily{U}{wncy}{}
\DeclareFontShape{U}{wncy}{m}{n}{%
<5>wncyr5%
<6>wncyr6%
<7>wncyr7%
<8>wncyr8%
<9>wncyr9%
<10>wncyr10%
<11>wncyr10%
<12>wncyr6%
<14>wncyr7%
<17>wncyr8%
<20>wncyr10%
<25>wncyr10}{}
\DeclareMathAlphabet{\cyr}{U}{wncy}{m}{n}
\begin{document}

  \title[Hypersurfaces cubiques]{Non rationalit\'e stable \\ d'hypersurfaces cubiques \\ sur des corps non alg\'ebriquement clos}

\author{J.-L. Colliot-Th\'el\`ene}
\address{CNRS et  Universit\'e Paris Sud\\Math\'ematiques, B\^atiment 425\\91405 Orsay Cedex\\France}
\email{ }

\date{To appear in the Proceedings of the International Colloquium on $K$-Theory, Mumbai, January 2016.
Paper submitted  June 30th, 2016, accepted September 24th, 2016}
\maketitle

\section*{Introduction}

Soit $F$ un corps.
 Soit $X$ une $F$-vari\'et\'e projective, lisse, g\'eom\'etriquement connexe $X$, telle
que $X(F)\neq \emptyset$.
 On s'int\'eresse aux propri\'et\'es suivantes.
 
 \medskip
 
(i) Rationalit\'e :  La $F$-vari\'et\'e $X$ est $F$-birationnelle \`a un espace  projectif $\P^d_{F}$.

(ii) Rationalit\'e stable :  Il existe un entier $r$ tel  que  la $F$-vari\'et\'e $X \times_{F}\P^r_{F}$
est $F$-birationnellle \`a $\P^{r+d}$.

(iii) R\'etracte rationalit\'e : Il existe un ouvert non vide $U \subset X$,  un entier~$s$,
un ouvert non vide
$V \subset \P^s_{F}$  et un $F$-morphisme $V \to U$ qui admet une section.

(iv) $R$-trivialit\'e : Pour tout corps $L$ contenant $F$, l'ensemble $X_{L}(L)/R$
quotient de $X(L)$ par la R-\'equivalence sur $X_{L}$
a  exactement un \'el\'ement.

(v) $CH_{0}$-trivialit\'e : La $F$-vari\'et\'e $X$ est universellement $CH_{0}$-triviale, c'est-\`a-dire que
 pour tout corps $L$ contenant $F$,
l'application degr\'e $deg_{L } : CH_{0}(X_{L}) \to \Z$ est un isomorphisme
(voir \cite{ACTP, CTP}).

(vi) Trivialit\'e de la cohomologie non ramifi\'ee : 
Pour tout module fini galoisien $M$ sur $F$ d'ordre premier \`a la caract\'eristique de $F$,
pour tout entier $i\geq 0$ et tout corps $L$ contenant $F$, l'application naturelle
$H^{i}(L,M) \to H^{i}_{nr}(L(X)/L,M)$ est un isomorphisme.
 \medskip

Chacune des propri\'et\'es 
 implique la suivante.  Pour le passage de (ii) \`a (iii) sur un corps quelconque, voir  \cite[Cor. 3.3]{merkurjev3}.
Qu'en toute caract\'eristique l'hypoth\`ese (iii) implique (iv) est \'etabli par Kahn et Sujatha  \cite[Thm. 8.5.1 et  Prop. 8.6.2]{kahnsujatha}.
 Pour les d\'efinitions  des diff\'erentes notions et des r\'ef\'erences pour les autres implications, on consultera \cite{kahnsujatha, ACTP, pirutka}.

\medskip
  
Soit $X \subset \P^n_{F}$, avec $n=d+1 \geq 3$ une hypersurface cubique lisse de dimension $d$
poss\'edant un point $F$-rationnel.
Il  est connu qu'une telle hypersurface est $F$-unirationnelle \cite{kollar}.

Pour $F$ alg\'ebriquement clos, $X$ est rationnelle si $d=2$, non rationnelle si $d=3$ 
et ${\rm char}(F)\neq 2$ (Clemens--Griffiths, Mumford, Murre).
Pour $d \geq 4$,  certaines hypersurfaces cubiques  lisses de dimension paire sont rationnelles.
Pour les hypersurfaces cubiques lisses de dimension impaire, on ne sait rien sur la
rationalit\'e, la rationalit\'e stable, ou m\^eme la r\'etracte rationalit\'e.
 Claire Voisin \cite{voisincubic}  a  montr\'e qu'il existe des hypersurfaces cubiques de dimension $d=3$ sur $\C$ qui sont
 universellement $CH_{0}$-triviales. Elle a aussi \'etabli cette derni\`ere propri\'et\'e pour de larges classes d'hypersurfaces cubiques de dimension $d=4$.

 Pour $F$ non alg\'ebriquement clos, que peut-on dire  ?

\medskip

Pour $F$ le corps des r\'eels, B. Segre  \cite{segre} observa qu'une surface cubique lisse $X/\R$
telle que l'espace topologique $X(\R)$ ait deux composantes connexes n'est pas
$\R$-rationnelle. Cette observation se g\'en\'eralise. Ceci est discut\'e au \S \ref{reel}, o\`u l'on montre
que pour tout entier $n \geq 3$ et tout corps $F \subset \R$ il existe des hypersurfaces cubiques lisses $X \subset \P^n_{\F}$
qui ne sont  pas $CH_{0}$-triviales, et en particulier ne sont  pas stablement rationnelles.

\medskip

Que peut-on dire lorsque le corps $F$ n'est pas formellement r\'eel, i.e. lorsque $-1$
est une somme de carr\'es dans $F$ ?

Pour $F$ un corps de caract\'eristique diff\'erente de 2, on note $u(F) \leq \infty$ le plus grand entier $n \leq \infty$
tel qu'il existe une forme quadratique anisotrope de rang $n$ sur $F$.  Rappelons  que l'on a $u(F((t)))=2 u(F)$.

\medskip

Pour $X \subset \P^3_{F}$, on peut utiliser le groupe de Brauer pour donner des exemples
de surfaces cubiques non stablement rationnelles \cite{manin}. On donne facilement de tels exemples
d\'ej\`a sur un corps fini $\F$, sur $\C((x))$, et  sur $\C(x)$. 

Sur $F=\C((x))((y))$,
 D. Madore \cite[Proposition 2.1]{madore} a construit, par un argument de sp\'ecialisation \'elabor\'e, une hypersurface cubique  diagonale $X \subset \P^4_{F}$  
 telle que  l'application degr\'e $CH_{0}(X) \to \Z$ ne soit pas un isomorphisme.
 Ceci implique que $X$ n'est pas  r\'etracte rationnelle.
 
 Sur $F$ un corps    $p$-adique  quelconque et  sur  $F=\F((x))$ (avec $\F$ fini de caract\'eristique diff\'erente de 3),
par sp\'ecialisation \`a une hypersurface cubique singuli\`ere sur un corps fini,
 A. Pirutka et l'auteur \cite[Th\'eor\`eme 1.19]{CTP} ont construit des hypersurfaces cubiques lisses $X \subset \P^4_{F}$ avec un $F$-point qui ne sont pas universellement $CH_{0}$-triviales.
 On en d\'eduit de tels exemples sur tout corps global de caract\'eristique diff\'erente de 3.

 Dans la pr\'epublication r\'ecente  \cite{CL},  par sp\'ecialisation \`a une hypersurface cubique produit d'une quadrique et d'un hyperplan,
 Chatzistamatiou et Levine 
construisent des exemples d'hypersurfaces cubiques lisses  $X \subset \P^n_{F}$ avec un $F$-point,
  non universellement $CH_{0}$-triviales, et donc
non r\'etractes rationnelles,
dans la situation suivante : $k$ un corps de caract\'eristique diff\'erente de 2, 
 $F=k((x))$, $u(k)\geq 2^{\ell}+1$ et  $n=2^{\ell}$.
  Une inspection de leur argument (voir \S \ref{ChatziLevine} ci-dessous)
 montre qu'il suffit en fait de supposer $u(k)\geq 2^{\ell}$.
 On obtient ainsi de tels exemples $X \subset \P^{2^{\ell}}_{F}$
 sur le corps $F=\C((\lambda_{1})) \dots ((\lambda_{\ell+1}))$.

Je propose ici deux autres m\'ethodes pour obtenir des hypersurfaces
cubiques lisses, avec un point rationnel, qui ne sont pas r\'etractes rationnelles.

Au \S \ref{viadphi},
sur $F=k((x))$, avec $k$  un corps de caract\'eristique  diff\'erente de 2, poss\'edant une forme de Pfister anisotrope de dimension $2^{\ell}$,
avec  $\ell \geq 2$, par exemple sur $F=\C((\lambda_{1})) \dots ((\lambda_{\ell+1}))$,
je  construis des exemples d'hypersurfaces cubiques lisses
 $X \subset \P^{n}_{F}$ avec un $F$-point, 
 non r\'etractes rationnelles,
 pour tout entier $n$ avec
$3 \leq n \leq 2^{\ell -1}+1$. Ceci ne couvre pas le cas $n=2^{\ell}$, obtenu par la m\'ethode de
Chatzistamatiou et Levine. L'argument donn\'e  utilise la sp\'ecialisation de la $R$-\'equivalence sur
une fibre sp\'eciale   g\'eom\'etriquement int\`egre mais singuli\`ere en codimension 2 et un succ\'edan\'e
de  cohomologie non ramifi\'ee sur la d\'esingularisation de cette fibre.

Au \S \ref{diagonal}, pour $k$ un corps de caract\'eristique    diff\'erente de 3 poss\'edant un \'el\'ement $a \notin k^{*3}$, par exemple pour $k=\C ((\lambda_{1} ))$, sur
le corps $F=k((\lambda_{2})) \dots ((\lambda_{\ell+1}))$
je  construis des exemples d'hypersurfaces cubiques lisses diagonales
 $X \subset \P^{n}_{F}$ avec un $F$-point, non universellement $CH_{0}$-triviales, donc
 non r\'etractes rationnelles,
 pour tout entier $n$ avec $3 \leq n \leq \ell +3 $.
 L'argument donn\'e utilise la cohomologie non ramifi\'ee, dont on d\'emontre par
 sp\'ecialisations successives qu'elle n'est pas constante.
 
 Au \S 5, on compare les r\'esultats  des \S 2, 3 et 4
 sur les corps de s\'eries formelles it\'er\'ees, d'abord sur les complexes puis
 sur les corps $p$-adiques.

\section{Composantes connexes r\'eelles}\label{reel}

\begin{theo}\label{rappelreel}
Soit $k$ un sous-corps de $\R$.
Soit $X$ une $k$-vari\'et\'e projective, lisse, g\'eom\'etriquement connexe.
Dans chacun des cas suivants :

(a) la $k$-vari\'et\'e $X$ est $k$-rationnelle,

(b) la $k$-vari\'et\'e  $X$ est $k$-r\'etracte rationnelle, 

(c) la $k$-vari\'et\'e   $X$ est universellement $CH_{0}$-triviale, 

\noindent  l'espace topologique $X(\R)$ est
  non vide et connexe.
  \end{theo}
\begin{proof}
Il suffit d'\'etablir le r\'esultat dans le cas $k=\R$.
Le cas (a) est un cas particulier de  (b) qui d'apr\`es \cite[Lemme 1.5]{CTP} est un cas particulier de (c). Si la $\R$-vari\'et\'e $X$ est universellement $CH_{0}$-triviale, en particulier l'application degr\'e $CH_{0}(X) \to \Z$ est un isomorphisme. Ainsi $X$ poss\`ede un z\'ero-cycle de degr\'e 1, et donc un point r\'eel.  Soit $s\geq 1$ le nombre de composantes connexes de $X(\R)$.
D'apr\`es \cite[Prop. 3.2]{CTI} (voir aussi \cite[Thm. 3.1]{fqmva2}),
 pour toute $\R$-vari\'et\'e projective, lisse, g\'eom\'etriquement connexe
avec $X(\R) \neq \emptyset$, on a  $CH_{0}(X)/2= (\Z/2)^{s}$. Ainsi $s=1$.
 \end{proof}
 \begin{rema}
Sous l'hypoth\`ese (a), on peut  \'etablir la connexit\'e de $X(\R)$ par des m\'ethodes
plus classiques.   On montre directement que le nombre de composantes connexes
de $X(\R)$ est un invariant birationnel des $\R$-vari\'et\'es projectives, lisses, g\'eom\'etriquement connexes
(ce qui r\'esulte aussi du r\'esultat sur le groupe de Chow mentionn\'e ci-dessus).
 \end{rema}

\begin{prop}  
Soit $k$ un sous-corps de $\R$.
Pour tout entier $n \geq 2$, il existe une hypersurface cubique lisse $X \subset \P^n_{k}$
telle que l'espace topologique $X(\R)$ ait deux composantes connexes.
Une telle hypersurface n'est pas r\'etracte rationnelle.
\end{prop}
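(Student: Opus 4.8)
The plan is to exhibit an explicit diagonal cubic over a subfield of $\R$ whose real locus is disconnected, and then invoke Theorem~\ref{rappelreel}. Consider the cubic hypersurface
\[
X \colon \quad x_{0}^{3} + x_{1}^{3} + \dots + x_{n}^{3} - t\,(x_{0} + \dots + x_{n})^{3} = 0
\]
(or, more robustly, a Fermat-type cubic with one coefficient of each sign suitably scaled), where the coefficients lie in $\Q \subset k$. First I would check smoothness: the diagonal cubic $\sum_{i} a_{i} x_{i}^{3}$ with all $a_{i} \neq 0$ is smooth over any field of characteristic $\neq 3$, since the partial derivatives $3 a_{i} x_{i}^{2}$ vanish simultaneously only at the origin. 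A perturbation/coefficient choice ensuring smoothness over $\R$ is an elementary computation (one can take generic rational coefficients and check the discriminant is nonzero). The only real point of the proof is the topology.

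The key topological input is this: for a smooth cubic hypersurface $X \subset \P^{n}_{\R}$ with $n \geq 2$, the real locus $X(\R)$ is a smooth compact manifold of dimension $n-1$, and its number of connected components can be read off from the signs of the defining form. For the specific choice I would use a \emph{negative definite minus positive definite} splitting: take
\[
X \colon \quad \bigl(x_{0}^{3} + \dots + x_{m}^{3}\bigr) \;=\; c\,\bigl(x_{m+1}^{3} + \dots + x_{n}^{3}\bigr)
\]
with $c > 0$ not a cube, $0 \le m \le n$ chosen so that both groups are nonempty, after replacing cubes of odd-index variables by $x_{i}^{3}$ (which is an odd function, so no sign obstruction) — the cleanest route is instead to work with the affine model in a chart and count components by a connectedness/Morse argument, or to use B. Segre's original surface $x_{0}^{3}+x_{1}^{3}+x_{2}^{3}+x_{3}^{3}$ with coefficients adjusted and observe its real locus has two sheets, then take iterated hyperplane sections / cones-and-blowups, or a product-then-projection construction, to raise the dimension while keeping two components. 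Concretely I would argue: pick a smooth cubic surface $S \subset \P^{3}_{k}$ with $S(\R)$ disconnected (Segre), then for $n > 3$ build $X \subset \P^{n}_{k}$ as a smooth cubic containing $S$ as a hyperplane-linear section in such a way that $X(\R)$ retracts onto something with the same component count, e.g.\ by a deformation argument: the family of smooth real cubics is not connected, and on each component the number of real components of the fiber is locally constant.

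Once $X(\R)$ is shown to have exactly two connected components, the conclusion is immediate from Theorem~\ref{rappelreel}(b): if $X$ were $k$-retract rational it would be so after base change to $\R$, hence $X(\R)$ would be connected, a contradiction; therefore $X$ is not retract rational (and a fortiori not stably rational, not rational). I would also note $X(k) \neq \emptyset$ can be arranged by choosing the coefficients so that an obvious rational point (e.g.\ $[1:-1:0:\dots:0]$ when two coefficients are equal and opposite) lies on $X$.

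\textbf{Main obstacle.} The genuine difficulty is the topological count: producing, for \emph{every} $n \geq 2$, a smooth cubic $n$-fold over $\R$ whose real points form a disconnected manifold, and proving the component count is exactly two. For $n=2$ this is classical (Segre). For general $n$ the cleanest argument is probably via an explicit diagonal or near-diagonal equation together with an intermediate-value / sign-change analysis showing $X(\R)$ meets two disjoint regions of $\P^{n}(\R)$ that cannot be joined within $X(\R)$ — for instance, separating real points by the sign of some coordinate ratio that is forced to be bounded away from the locus where the two "branches" could meet, which would be a smooth point locus were the hypersurface singular there but is empty by smoothness. Making this separation rigorous, and simultaneously checking it never produces \emph{more} than two components, is where the real work lies; everything else is formal, via the already-proved Theorem~\ref{rappelreel}.
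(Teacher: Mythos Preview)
Your proposal has a genuine gap: none of the candidate constructions actually works, and the paper's key idea is absent. Diagonal cubics $\sum a_i x_i^3=0$ over $\R$ are all real-isomorphic to the Fermat cubic (scale $x_i$ by $|a_i|^{1/3}$ and flip signs, using that $x\mapsto x^3$ is odd), and the Fermat cubic has \emph{connected} real locus in every dimension; so your first approach cannot produce two components, and Segre's disconnected surface is not $x_0^3+x_1^3+x_2^3+x_3^3$. Your fallback ideas (build up from Segre's surface by sections or cones, or argue by a deformation/moduli connectedness statement) are not made precise, and it is not at all clear how to implement them: a cone over a disconnected surface is singular with a real singular point, a generic hyperplane section does not control component counts upward, and the locus of smooth real cubics with disconnected real locus is not obviously nonempty in each dimension without an explicit witness.

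The paper's trick, which you are missing, is to start from a \emph{singular} cubic whose real singular locus is empty, so that $X(\R)$ is still a $C^\infty$ manifold and the component count is elementary. Concretely one takes
\[
\Bigl(\sum_{i=1}^n x_i^2\Bigr)\,v \;=\; u(u-v)(u+v),
\]
whose singular locus $u=v=\sum x_i^2=0$ has no real points; in the affine chart $v=1$ the equation is $\sum x_i^2 = u(u-1)(u+1)$, and the real locus visibly splits into a bounded piece over $-1\le u\le 0$ and an unbounded piece over $u\ge 1$. One then perturbs by $t\cdot S$ with $S$ a smooth cubic and $t\in k$ small: for small $t$ the hypersurface $X_t$ is smooth, and by Ehresmann's theorem $X_t(\R)$ is diffeomorphic to $X_0(\R)$, hence still has two components. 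The final appeal to Th\'eor\`eme~\ref{rappelreel} is exactly as you say. The two ingredients you should retain are: (i) allow a singular model as long as the \emph{real} singular locus is empty, which lets you reduce the topology to a one-variable sign analysis; (ii) pass to a nearby smooth cubic via Ehresmann.
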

\begin{proof}
 Soit $X \subset \P^{n+1}_{k}$ donn\'ee par l'annulation de 
$$R(x_{1} ,\dots,x_{n},u,v):=  (\sum_{i=1}^n x_{i}^2)v   - u(u-v)(u+ v).$$
Le lieu singulier est donn\'e  par
$u=v=0=\sum_{i=1}^n x_{i}^2=0$, donc ne poss\`ede par de $\R$-point.
Ainsi $X(\R)$ est une vari\'et\'e $C^{\infty}$.  On v\'erifie que cette vari\'et\'e
poss\`ede deux composantes connexes. 
Soit en effet $H \subset \P^{n+1}_{\R}$ l'hyperplan \`a l'infini d\'efini par $v=0$,
et soit $\A^{n+1}_{\R}$ son compl\'ementaire. 
La trace de $X_{\R}$ sur $\A^{n+1}_{\R}$ est donn\'ee par l'\'equation affine
$$ \sum_{i=1}^n x_{i}^2   = u(u-1)(u+ 1)$$
dont le lieu r\'eel est la r\'eunion disjointe d'une partie  born\'ee
satisfaisant    $-1 \leq u \leq 0$ et d'une partie non born\'ee satisfaisant 
 $u \geq 1$. Ceci montre que $X(\R)$ est disconnexe, donc
 a deux composantes connexes,  car c'est le maximum possible pour
 une hypersurface cubique $X$  dans $\P^n_{\R}$ avec $n \geq 2$.

 Soit $S(x_{1} ,\dots,x_{n},u,v)$ une forme cubique sur $\Q$ d\'efinissant une
 hypersurface cubique lisse, par exemple
 $$S(x_{1} ,\dots,x_{n},u,v)=\sum_{i=1}^n x_{i} ^3+ u^3+v^3.$$
Pour $t  \in k \subset \R$ non nul et tr\`es proche de $0$,
  l'hypersurface cubique $X_{t}$ de $\P^{n+1}_{k} $ d\'efinie par
 $R+tS =0$ est lisse. Pour tout  $t\in \R$ assez proche de $0$,
 les vari\'et\'es $C^{\infty}$ donn\'ees par $X_{t}(\R)$ et $X_{0}(\R)=X(\R)$  sont diff\'eomorphes
 (th\'eor\`eme d'Ehresmann), et en particulier ont le m\^eme nombre de composantes connexes.
 La deuxi\`eme partie de l'\'enonc\'e r\'esulte du th\'eor\`eme \ref{rappelreel}.
 \end{proof}

\section{R\'esultats  de    {\ChaL}}\label{ChatziLevine}

Commen\c cons par rappeler un r\'esultat de Totaro
 \cite[Lemme 2.4 
et argument subs\'equent]{totaro}. Pour $X$ une $k$-vari\'et\'e propre,
on note $A_{0}(X)$ le noyau de l'application degr\'e $deg_{k} :CH_{0}(X) \to \Z$.

\begin{lem}
Soit $R$ un anneau de valuation discr\`ete hens\'elien de corps r\'esiduel $k$
 et de corps des fractions $K$.
 Soit $\sX$ un $R$-sch\'ema int\`egre propre et plat. Soit $X= \sX \times_{R}K$.
 Supposons que la fibre sp\'eciale  $Y/k$ est la r\'eunion de deux diviseurs
 $Y_{1} $ et $Y_{2}$ sans composante commune.
 Soit $Z$ le $k$-sch\'ema $ Y_{1} \cap Y_{2}$.
  Supposons $Y_{1}/k$ lisse et $A_{0}(X)=0$.  Alors :
  
  (i)  L'application
 $A_{0}(Z) \to A_{0}(Y_{1})$ est   surjective.
 
 (ii)
 Si de plus  l'indice de $Z$ est \'egal \`a celui de $Y_{1}$,
 par exemple si $Z$ poss\`ede un z\'ero-cycle de degr\'e 1,
alors l'application $CH_{0}(Z) \to CH_{0}(Y_{1})$ est surjective.
  \end{lem}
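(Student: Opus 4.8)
The plan is to combine the specialization of zero-cycles with the fact that the smoothness of $Y_{1}$ forces $\sX$ to be regular, hence smooth over $R$, along $Y_{1}\setminus Z$. Recall first that for the proper flat $R$-scheme $\sX$ there is a degree-preserving specialization homomorphism $\sigma\colon CH_{0}(X)\to CH_{0}(Y)$: the closure in $\sX$ of a closed point of $X$ is finite over $R$, and its special fibre is a zero-cycle on $Y$ of the same degree; one checks compatibility with rational equivalence (this is classical; see Fulton, \emph{Intersection Theory}, \S 20.3). Since $A_{0}(X)=0$, the degree map identifies $CH_{0}(X)$ with the infinite cyclic group $\ind(X)\,\Z$; write $d=\ind(X)$, let $\xi$ be a generator (of degree $d$), and set $\xi_{0}=\sigma(\xi)\in CH_{0}(Y)$.

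Next comes the geometric input. Put $U_{1}=Y_{1}\setminus Z$. At a closed point $y$ of $U_{1}$ the component $Y_{2}$ does not pass, so, $\pi$ denoting a uniformizer of $R$, one has $\sO_{\sX,y}/(\pi)=\sO_{Y_{1},y}$ regular and $\pi$ a non-zero-divisor; hence $\sO_{\sX,y}$ is regular and, as the special fibre $Y_{1}$ is smooth over $k$ at $y$, the morphism $\sX\to\Spec R$ is smooth along $U_{1}$. Since $R$ is henselian, every closed point $y$ of $U_{1}$ is the specialization of a closed point $P_{y}$ of $X$ with $[\kappa(P_{y}):K]=[\kappa(y):k]$: indeed $P_{y}$ is the generic point of the image of a section $\Spec R'\to\sX$, where $R'$ is the unramified extension of $R$ with residue field $\kappa(y)$, and the closure of $P_{y}$ in $\sX$ is that image, with special fibre the reduced point $y$. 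Hence $\sigma([P_{y}])=[y]$; and since $d\mid[\kappa(y):k]$ and $A_{0}(X)=0$ give $[P_{y}]=\tfrac{[\kappa(y):k]}{d}\,\xi$ in $CH_{0}(X)$, we obtain
\[ [y]=\frac{[\kappa(y):k]}{d}\,\xi_{0}\ \ \text{in } CH_{0}(Y),\qquad\text{for every closed point }y\text{ of }U_{1}. \]

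To finish, the localization sequence $CH_{0}(Z)\to CH_{0}(Y_{1})\to CH_{0}(U_{1})\to 0$ shows that $A_{0}(Z)\to A_{0}(Y_{1})$ is onto exactly when $A_{0}(Y_{1})$ maps to $0$ in $CH_{0}(U_{1})$; and by the displayed relation (and as $CH_{0}$ is generated by closed points) $CH_{0}(U_{1})$ is cyclic, generated by the image of $\xi_{0}$. On the other hand the Mayer--Vietoris sequence $CH_{0}(Z)\to CH_{0}(Y_{1})\oplus CH_{0}(Y_{2})\to CH_{0}(Y)\to 0$ for $Y=Y_{1}\cup Y_{2}$, $Z=Y_{1}\cap Y_{2}$, yields $\ker\!\bigl(CH_{0}(Y_{1})\to CH_{0}(Y)\bigr)\subseteq\operatorname{im}\!\bigl(CH_{0}(Z)\to CH_{0}(Y_{1})\bigr)$. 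Decomposing a class $\alpha\in A_{0}(Y_{1})$ as (part supported on $U_{1}$) $+$ (part supported on $Z$): the second summand lies automatically in $\operatorname{im}(CH_{0}(Z)\to CH_{0}(Y_{1}))$, while the image of the first in $CH_{0}(Y)$ is, by the displayed relation, an explicit multiple of $\xi_{0}$; transporting this through the two sequences reduces (i) to showing that the relevant multiple of $\xi_{0}$ already lies in the image of $CH_{0}(Z)$ in $CH_{0}(Y)$ — the only place where $A_{0}(X)=0$ is genuinely used, together with a bookkeeping of the indices. Granting (i), (ii) is formal: as pushforward preserves degrees, $\operatorname{im}(CH_{0}(Z)\to CH_{0}(Y_{1}))$ contains classes of every degree in $\ind(Z)\,\Z$, whence
\[ CH_{0}(Y_{1})\big/\bigl(A_{0}(Y_{1})+\operatorname{im}(CH_{0}(Z)\to CH_{0}(Y_{1}))\bigr)\ \cong\ \ind(Y_{1})\,\Z\big/\ind(Z)\,\Z, \]
which is zero when $\ind(Z)=\ind(Y_{1})$; together with (i) this gives surjectivity of $CH_{0}(Z)\to CH_{0}(Y_{1})$.

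The main obstacle is this last reduction: transporting the relation for $[y]$ from $CH_{0}(Y)$ to $CH_{0}(Y_{1})$ while controlling degrees and indices. The delicate feature is that along $Z$ — where $\sX$ is in general singular — no lifting of points to $X$ is available, so the fact that the portion of a degree-zero cycle lying over $Z$ causes no obstruction must be extracted from $A_{0}(X)=0$ itself. (The separability of the residue extensions used for the Hensel lifting above is automatic when $\operatorname{char} K=0$, which is the setting of the applications.)
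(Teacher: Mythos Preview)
Your argument does not reach (i): you yourself call the final step ``the main obstacle'' and leave it open. Worse, the detour through $CH_{0}(U_{1})$ is confused. The class $\xi_{0}$ lives in $CH_{0}(Y)$, and there is no map $CH_{0}(Y)\to CH_{0}(U_{1})$, so the assertion that ``$CH_{0}(U_{1})$ is cyclic, generated by the image of $\xi_{0}$'' has no meaning as written; your displayed relation $[y]=\tfrac{\deg y}{d}\,\xi_{0}$ is a relation in $CH_{0}(Y)$, not in $CH_{0}(Y_{1})$ or $CH_{0}(U_{1})$. The decomposition of $\alpha$ into a part on $U_{1}$ and a part on $Z$ then forces you to track the degree of each piece separately, which is where you get stuck.

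The fix is short and uses only what you already have. Since $Y_{1}$ is smooth, the moving lemma lets you represent any $\alpha\in A_{0}(Y_{1})$ by a cycle $z=\sum_{i} n_{i}[y_{i}]$ supported entirely on $U_{1}$, with $\sum_{i} n_{i}\deg(y_{i})=0$. Now push $z$ forward to $CH_{0}(Y)$ and apply your displayed relation term by term:
\[
\text{image of }z\ =\ \sum_{i} n_{i}\,\frac{\deg(y_{i})}{d}\,\xi_{0}\ =\ 0\ \in\ CH_{0}(Y).
\]
Hence $(z,0)$ lies in the kernel of $CH_{0}(Y_{1})\oplus CH_{0}(Y_{2})\to CH_{0}(Y)$, and Mayer--Vietoris produces $\zeta\in CH_{0}(Z)$ with image $(z,0)$. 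Since the pushforward $CH_{0}(Z)\to CH_{0}(Y_{2})$ preserves degrees and sends $\zeta$ to $0$, one has $\zeta\in A_{0}(Z)$, and its image in $A_{0}(Y_{1})$ is $\alpha$. This is (i); your argument for (ii) from (i) is fine.

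This is exactly the paper's proof, only the paper is more direct: rather than lifting closed points one at a time and deducing $[y]=\tfrac{\deg y}{d}\,\xi_{0}$, it moves $z$ off $Z$, lifts the whole degree-zero cycle $z$ to $X$ in one stroke (smoothness of $\sX$ along $U_{1}$ plus henselian), invokes $A_{0}(X)=0$ to conclude the lift is rationally trivial, and specialises to get $z=0$ in $CH_{0}(Y)$. No index bookkeeping, no $\xi_{0}$, no localisation sequence. Your point-by-point lifting is a valid alternative route to the same vanishing in $CH_{0}(Y)$, but the passage through $CH_{0}(U_{1})$ is the wrong place to try to read it off. (Your parenthetical on separability is also slightly misplaced: lifting a closed point of the smooth locus to a zero-cycle of the same degree on $X$ works over any henselian DVR, though your specific construction via the unramified extension $R'$ does require $\kappa(y)/k$ separable.)
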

 
\begin{proof}
 On a une suite exacte
 $$ CH_{0}(Z) \to CH_{0}(Y_{1}) \oplus CH_{0}(Y_{2}) \to CH_{0}(Y) \to 0$$
 o\`u la seconde fl\`eche  envoie un couple $(z_{1},z_{2})$ sur $z_{1}-z_{2}$.

Comme $Y_{1}$ est lisse,  tout z\'ero-cycle de degr\'e z\'ero sur $Y_{1}$ est rationnellement \'equivalent sur $Y_{1}$
 \`a un z\'ero-cycle $z$ de degr\'e z\'ero  \`a support dans le compl\'ementaire de $Z$ dans $Y_{1}$.
 Un tel z\'ero-cycle $z$ se rel\`eve en un z\'ero-cycle de degr\'e z\'ero sur $X$,
dont l'image  par la fl\`eche de sp\'ecialisation
 $ CH_{0}(X) \to CH_{0}(Y)$
 est l'image du couple $(z,0) \in CH_{0}(Y_{1}) \oplus CH_{0}(Y_{2})$.
 L'hypoth\`ese $A_{0}(X)=0$ assure que cette image est nulle.
 La suite exacte ci-dessus \'etablit alors l'existence d'une classe
 $\zeta \in CH_{0}(Z)$ dont l'image est $(z,0) \in CH_{0}(Y_{1}) \oplus CH_{0}(Y_{2})$.
En particulier le  degr\'e de $\zeta$ est z\'ero, ce qui \'etablit l'assertion (i). L'assertion (ii) est alors claire.
\end{proof}
 
Voici une version 
du r\'esultat utilis\'e par {\ChaL}  \cite{CL}.

 \begin{lem}\label{specred}
 Soit $R$ un anneau de valuation discr\`ete   de corps r\'esiduel $k$
 et de corps des fractions $K$.
 Soit $\sX$ un $R$-sch\'ema int\`egre propre et plat. Soit $X= \sX \times_{R}K$.
 Supposons que la fibre sp\'eciale  $Y/k$ est, comme diviseur, la somme de deux diviseurs effectifs 
 $Y_{1}$
  et $Y_{2}$, 
  qui comme $k$-vari\'et\'es sont  lisses et  g\'eom\'etriquement int\`egres.
 Soit $Z$ le $k$-sch\'ema $ Y_{1} \cap Y_{2}$.

  Supposons la $K$-vari\'et\'e $X$ g\'eom\'etriquement int\`egre et universellement
  $CH_{0}$-triviale.   
Si   l'indice de ${Y_{2}}_{   k(Y_{1})   }    $ sur le corps $k(Y_{1})$ est \'egal \`a 1,
  alors l'application $CH_{0}(Z_{k(Y_{1})} ) \to CH_{0}({Y_{1}} _{k(Y_{1})})$ est surjective,
  et l'indice de $ Z_{k(Y_{1})} $ sur $k(Y_{1})$ est \'egal \`a 1.
  \end{lem}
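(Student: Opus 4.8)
The strategy is to apply the previous lemma (the Totaro-style specialization result) after base change to the function field $k(Y_1)$. Concretely, I would pass from $R$ to the discrete valuation ring $R'$ obtained by localizing $R \otimes_k k(Y_1)$ (or rather by taking an appropriate henselization / completion with residue field $k(Y_1)$): since $Y_1/k$ is smooth and geometrically integral, its function field $k(Y_1)$ is a regular extension of $k$, and one gets a discrete valuation ring $R'$ dominating $R$ with residue field $k(Y_1)$ and fraction field $K' \supset K$. The $R$-scheme $\sX$ pulls back to an integral proper flat $R'$-scheme $\sX' = \sX \times_R R'$ (integrality is preserved because $X$ is geometrically integral over $K$, hence $X \times_K K'$ is integral, and properness plus flatness are stable under base change).

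\medskip

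**Carrying out the reduction.** Over $R'$, the special fibre of $\sX'$ is $Y \times_k k(Y_1) = (Y_1)_{k(Y_1)} + (Y_2)_{k(Y_1)}$ as a divisor, and the hypotheses of the earlier lemma are met: $(Y_1)_{k(Y_1)}$ is smooth over $k(Y_1)$ (smoothness is preserved by base change), the two components have no common component, their scheme-theoretic intersection is $Z_{k(Y_1)}$, and — crucially — the generic fibre $X_{K'} = X \times_K K'$ still has $A_0(X_{K'}) = 0$ because $X$ was assumed \emph{universally} $CH_0$-trivial, so in particular $CH_0(X_{K'}) \iso \Z$. Applying part (i) of the earlier lemma over $R'$ gives surjectivity of $A_0(Z_{k(Y_1)}) \to A_0((Y_1)_{k(Y_1)})$. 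Now $(Y_1)_{k(Y_1)}$ carries the diagonal $k(Y_1)$-point (the generic point of $Y_1$), so it has index $1$; combined with the hypothesis that $(Y_2)_{k(Y_1)}$ has index $1$ over $k(Y_1)$, one checks that $Z_{k(Y_1)}$ has index $1$ as well, and then part (ii) of the earlier lemma yields surjectivity of $CH_0(Z_{k(Y_1)}) \to CH_0((Y_1)_{k(Y_1)})$, which is the claim.

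\medskip

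**Main obstacle.** The routine verifications (base change stability of smoothness, properness, flatness, the intersection computation) are harmless; the delicate point is constructing the discrete valuation ring $R'$ with residue field exactly $k(Y_1)$ and fraction field extending $K$, while keeping $\sX'$ \emph{integral}. Since $Y_1$ is only geometrically integral, not rational, one cannot simply adjoin variables; instead one localizes $\sX$ along (the generic point of) $Y_1$: the local ring $\sO_{\sX, \eta_{Y_1}}$ — or better, the local ring of $R$ at the appropriate height-one prime after the relevant base change — should be a discrete valuation ring with residue field $k(Y_1)$, because $Y_1$ appears with multiplicity one in the special fibre. One must be careful that the relevant total space is still integral after this localization and base change, which is where geometric integrality of $X/K$ is used. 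I would also need to make sure the index hypothesis "$\ind (Y_2)_{k(Y_1)} = 1$" transports correctly, i.e. that a zero-cycle of degree $1$ on $(Y_2)_{k(Y_1)}$ really does what is needed to conclude $\ind Z_{k(Y_1)} = 1$ — this uses that $Z = Y_1 \cap Y_2$ sits inside $Y_2$, so restricting that cycle (or rather using the surjection from part (i) together with the degree-$1$ cycle) forces the index of $Z_{k(Y_1)}$ down to $1$.
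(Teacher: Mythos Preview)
Your overall strategy --- pass to a henselian DVR $S$ with residue field $k(Y_1)$ and fraction field $L \supset K$, then run a specialization argument --- is exactly what the paper does. But your attempt to reduce the heart of the proof to a black-box citation of the previous lemma has a real gap.

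Part (i) of the earlier lemma only gives you surjectivity of $A_0(Z_{k(Y_1)}) \to A_0((Y_1)_{k(Y_1)})$; it says nothing about the index of $Z_{k(Y_1)}$. You then invoke part (ii), but part (ii) \emph{assumes} that the index of $Z$ equals the index of $Y_1$ --- precisely the statement you are trying to prove. Your sentence ``combined with the hypothesis that $(Y_2)_{k(Y_1)}$ has index $1$ \dots one checks that $Z_{k(Y_1)}$ has index $1$ as well'' is the entire content of the lemma, and it does not follow formally from the previous lemma. The hint you give (``$Z$ sits inside $Y_2$, so restricting that cycle\dots'') does not work either: a degree-$1$ zero-cycle on $(Y_2)_{k(Y_1)}$ has no reason to live on $Z_{k(Y_1)}$, and pushing forward a cycle from $Y_2$ into $Y_1$ is not a meaningful operation.

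What the paper actually does is re-run the lifting argument with a crucial twist: start with an \emph{arbitrary}-degree zero-cycle $z$ on $(Y_1)_{k(Y_1)}$, move it off $Z$ to $z_1$ (using smoothness of $Y_1$), then use the index-$1$ hypothesis on $(Y_2)_{k(Y_1)}$ to produce a zero-cycle $z_2$ on $(Y_2)_{k(Y_1)}$ \emph{of the same degree as $z_1$}, moved off $Z$ (using smoothness of $Y_2$). Now $z_1 - z_2$ has degree zero and is supported in the smooth locus of $Y_{k(Y_1)}$, hence lifts to $X_L$; universal $CH_0$-triviality kills it there, and Fulton's specialization map pushes it to zero in $CH_0(Y_{k(Y_1)})$. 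The localization sequence then produces $\zeta \in CH_0(Z_{k(Y_1)})$ with image $z$. Applied to the diagonal point (degree $1$) this gives the index statement.

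Finally, the ``main obstacle'' you flag --- constructing $R'$ with residue field $k(Y_1)$ --- is not the real issue. The paper dispatches it in one line (existence of a local homomorphism $R \to S$ of DVRs, $S$ henselian, inducing $k \hookrightarrow k(Y_1)$ on residue fields), and integrality of the base-changed total space is not even needed for the argument, which proceeds directly with the exact sequence and specialization. The substantive step is the one you glossed over.
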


  \begin{proof}
  Il existe un homomorphisme local $R \to S$ d'anneaux de valuation discr\`ete, avec $S$ hens\'elien,
  de corps des fractions $L$
  induisant l'inclusion $k \subset k(Y_{1})$ au niveau des corps r\'esiduels.
  On consid\`ere alors la suite exacte
   $$ CH_{0}(Z_{k(Y_{1}  )}   ) \to CH_{0}(   {Y_{1}}_{k(Y_{1}  )} ) \oplus CH_{0}({Y_{2}} _{k(Y_{1}  )} ) \to CH_{0}(Y_{k(Y_{1}  )} ) \to 0.$$
   
  Soit $z$ un z\'ero-cycle sur ${Y_{1}}_{k(Y_{1}  )}$. Comme $Y_{1}$ est lisse,
  ce z\'ero-cycle est rationnellement \'equivalent \`a un z\'ero-cycle $z_{1} $ \`a support
 \'etranger \`a $Z$.
  Par hypoth\`ese, il
 existe un z\'ero-cycle $w$ sur ${Y_{2}} _{k(Y_{1}  )}$  de degr\'e \'egal \`a celui de $z_{1}$.
Comme $Y_{2}$ est lisse, ce z\'ero-cycle  $w$ est rationnellement \'equivalent sur ${Y_{2}} _{k(Y_{1}  )}$
\`a un z\'ero-cycle $z_{2}$ dont le support est \'etranger \`a $Z$.
Le z\'ero-cycle $z_{1}-z_{2}$ sur $Y_{k(Y_{1}  )}$ est \`a support dans le lieu lisse du
morphisme $\sX \to \Spec(R)$. 
Il se rel\`eve  donc en un z\'ero-cycle de degr\'e z\'ero sur $X_{L}$
(ceci vaut m\^eme sur un corps non parfait).
Comme la $K$-vari\'et\'e $X$ est universellement  $CH_{0}$-triviale,
par sp\'ecialisation \cite[Prop. 2.6]{fulton},
 $z_{1}-z_{2}$ est rationnellement \'equivalent \`a z\'ero sur $Y_{k(Y_{1}  )}$.
 De la suite exacte ci-dessus on tire l'existence d'une classe de z\'ero-cycle sur $Z_{k(Y_{1}  )}$
 d'image la classe de $z$ dans $CH_{0}({Y_{1}}_{k(Y_{1}  )} )$. Ceci \'etablit la premi\`ere
 partie de l'\'enonc\'e.
 Appliquant le r\'esultat au z\'ero-cycle $z$ de  ${Y_{1}}_{k(Y_{1}  )}$ de degr\'e 1 d\'efini par le point
 g\'en\'erique de $Y_{1}$, on obtient la seconde partie de l'\'enonc\'e.
  \end{proof}

 \begin{rema}
 On peut  donner des variantes de l'\'enonc\'e ci-dessus.
 Supposons $Y_{1}/k$  et $Y_{2}/k$   g\'eom\'etriquement int\`egres mais non n\'ecessairement lisses.
 Supposons qu'il existe un z\'ero-cycle de degr\'e 1 \`a support dans le lieu lisse de  $Y_{2}  \setminus Z$.
 Alors l'indice  de   $ Z_{k(Y_{1})} $ sur $k(Y_{1})$ est \'egal \`a 1.
 En effet, le point g\'en\'erique  $\eta$ de $Y_{1}$ d\'efinit un $k(Y_{1})$-point lisse
 de ${Y_{1}}_{k(Y_{1}}  \setminus Z_{k(Y_{1}}$. L'argument ci-dessus \'etablit alors l'existence
 d'un z\'ero-cycle sur $Z_{k(Y_{1})}$ d'image la classe de $\eta$ dans 
 $ CH_{0}({Y_{1}}_{k(Y_{1}  )} )$.
 \end{rema}

 \begin{theo}({\ChaL})
 Soit $k$ un corps de caract\'eristique  diff\'erente de 2. Si sur  le corps $k$ il existe une forme quadratique anisotrope  en $n= 2^{\ell}$ variables, 
alors il existe une hypersurface cubique lisse $X \subset \P^n_{k((t))}$ qui poss\`ede un $k((t))$-point et qui n'est pas
 universellement $CH_{0}$-triviale.
 \end{theo}

 \begin{proof}
 Soit $\ell \geq 1$, soit $k$ un corps, $car(k) \neq 2$, et soit $q(x_{1},\dots,x_{n})$ une forme quadratique anisotrope
 de rang exactement $n=2^{\ell}$. Soit 
  $$q'(x_{0}, \dots, x_{n}) : =q(x_{1},\dots,x_{n}) + x_{0}^2.$$
   Un th\'eor\`eme de Hoffmann \cite{hoffmann}  (dont une nouvelle d\'emonstration fut  donn\'ee par Merkurjev \cite{merkurjev1} au moyen de
  formules du degr\'e \`a la Rost)  
  assure qu'il n'y a pas d'application rationnelle de la quadrique d\'efine par $q'=0$ dans $\P^n_{k}$ vers la
 quadrique d\'efinie par $q=0$ dans $\P^{n-1}_{k}$. Il n'y a pas besoin ici de supposer que la forme quadratique $q'$ est
 anisotrope.
 Soit $Y_{1} \subset \P^n_{k}$ la quadrique lisse d\'efinie par $q'=0$. Soit $Y_{2} \subset \P^n_{k}$
 l'hyperplan d\'efini par $x_{0}=0$. 
 Alors $Z=Y_{1} \cap Y_{2} \subset Y_{2}$ est la quadrique  d\'efinie  par $q=0$ dans $Y_{2} \simeq \P^{n-1}_{k}$.  
 Il existe une forme cubique lisse en $n+1$ variables sur $k((t))$ qui se sp\'ecialise
 en $t=0$ sur la forme cubique $q'(x_{0}, \dots, x_{n}).x_{0}$, et qui poss\`ede 
un z\'ero non trivial sur $k((t))$, car aucun $k$-point de $Y_{2}$ n'est situ\'e sur $Z$.
  Soit $X \subset \P^n_{k((t))}$
 l'hypersurface cubique lisse qu'elle d\'efinit.
 Si l'indice de $Z$ sur $k(Y_{1})$ \'etait \'egal \`a 1, alors par un  th\'eor\`eme bien connu de Springer,
  la quadrique $Z$ aurait un $k(Y_{1})$-point, donc il y aurait une application rationnelle de $Y_{1}$
  vers $Z$, contredisant le th\'eor\`eme d'Hoffmann. 
   Le lemme \ref{specred} permet alors de conclure que  l'hypersurface cubique $X \subset \P^n_{k((t))}$ n'est pas universellement
  $CH_{0}$-triviale.
 \end{proof}

 \bigskip

\section{Formes quadratiques multiplicatives et R-\'equivalence}\label{viadphi}

\subsection{Certaines hypersurfaces cubiques}\label{chatelet}

 Soit $k$ un corps de caract\'eristique  diff\'erente de 2. Soit  $q(x_{1},\dots,x_{n})$, $n \geq 2$ une forme quadratique non d\'eg\'en\'er\'ee sur $k$. Soit $\rho \in k^*$,
 $\rho \neq 0, \rho \neq 1$.

Soit  $X =X(q,\rho) \subset \P^{n+1}_{k}$ l'hypersurface cubique donn\'ee par 
l'\'equation homog\`ene\footnote{Pour $n=2$, on obtient une surface de Ch\^atelet.}
$$q(x_{1}, \dots, x_{n}) v = u(u-v)(u-\rho v).$$
 Le lieu de non lissit\'e  de $X$ est donn\'e par 
 $u=v=0=q=0$. 
 
Suivant 
 \cite[\S 5]{fqmva1}, soit 
$Y=Y(q,\rho)$ 
le fibr\'e en quadriques sur $\P^1_{k}$  obtenu par recollement de
la vari\'et\'e $Y_{1}$ d\'efinie par
$$q(X_{1}, \dots, X_{n}) =U(U-1)(U-\rho)T^2$$
dans  $\P^n_{k} \times_{k} \A^1_{k}$
muni des coordonn\'ees $(X_{1},\dots,X_{n},T; U)$
et de la vari\'et\'e $Y_{2}$ d\'efinie par
$$q(X'_{1}, \dots, X'_{n}) =V(1-V)(1-\rho V)T^2$$
dans  $\P^n_{k} \times_{k} \A^1_{k}$
muni des coordonn\'ees 
 $(X'_{1},\dots,X'_{n},T; V)$, le recollement se faisant via 
 $V=1/U$ et  $(X'_{1},\dots,X'_{n},T)=(U^{-2}X_{1},\dots,U^{-2}X_{n},T).$ 
 
Les $k$-vari\'et\'es  $X$ et $Y$ sont $k$-birationnellement isomorphes,
 elles contiennent toutes deux la $k$-vari\'et\'e affine lisse $W$ d'\'equation
$$q(x_{1}, \dots, x_{n})  = u(u-1)(u-\rho).$$

Plus pr\'ecis\'ement, on d\'efinit un morphisme birationnel
de $X \setminus \{v=0\}$ vers $Y_{1}$
par
$$(X_{1}, \dots, X_{n}, T; U) = (x_{1}, \dots, x_{n},v ; u/v).$$
On d\'efinit un morphisme birationnel de $X \setminus \{u=0\}$ vers $Y_{2}$
par
$$(X'_{1}, \dots, X'_{n}, T; V) = (x_{1}v,\dots,x_{n}v, u^2; v/u).$$

Ceci d\'efinit un morphisme de $X \setminus \{u=v=0\}$ vers $Y$.

Montrons que ce morphisme s'\'etend en un morphisme de 
$$X_{lisse}=X \setminus \{u=v=q=0\}$$ vers $Y$.
Il suffit d'\'etablir cela au voisinage des points de $u=v=0$ qui satisfont $q\neq 0$.
Sans perte de g\'en\'eralit\'e, on peut  alors se placer sur l'ouvert affine $x_{1} \neq 0$ de $X$.
On s'int\'eresse donc \`a la vari\'et\'e affine d\'efinie par l'\'equation
$$q(1, x_{2}, \dots, x_{n}) = u(u-v)(u-\rho v).$$
Cette \'equation se r\'e\'ecrit
$$[q(1,x_{2}, \dots,x_{n}) +(1+\rho)u^2-\rho uv].v = u^3.$$
Au voisinage d'un point de $u=v=0$ avec $q\neq 0$
la fonction $$f(x_{2}, \dots,x_{n},u,v) :=   q(1,x_{2}, \dots,x_{n}) +(1+\rho)u^2-\rho uv$$ est
inversible.
L'application  rationnelle de $X$ vers $Y_{2}$
est donn\'ee sur l'ouvert  affine d\'efini par $x_{1}\neq 0$ par
$$(X'_{1}, \dots, X'_{n}, T; U) = (v, x_{2}v, \dots,x_{n}v, u^2; v/u).$$
Comme on a $fv=u^3$, cette application est donc aussi donn\'ee par 
$$(X'_{1}, \dots, X'_{n}, T; U) = (u, x_{2}u, \dots,x_{n}u, f; u^2.f^{-1})$$
qui est un morphisme l\`a o\`u $f$ est inversible, donc dans le voisinage de tout point avec
$u=v=0$ et $q\neq 0$.

On notera que l'image de $u=v=0$, $q\neq 0$ est le point $(0,\dots,0,1; 0)$ de $Y_{2} \subset Y$.

Soit d\'esormais $\theta: X_{lisse} \to Y$ le $k$-morphisme birationnel construit ci-dessus.

Soient $A \in W(k) \subset X(k)$ le  $k$-point lisse d\'efini par  $(x_{1}, \dots,x_{n},u,v)= (0, \dots,0, 1,1)$ et
$B \in W(k) \subset X(k)$ le  $k$-point lisse d\'efini par   $(x_{1}, \dots,x_{n},u,v)= (0, \dots,0, \rho,1)$.  Sur $Y(k)$, les images de ces points  par $\theta$ sont donn\'es respectivement par
 $A_{1}=\theta(A)  \in Y_{1}(k)$ de coordonn\'ees $(X_{1},\dots,X_{n},T; u)=(0,\dots,0,1;1)$
et  $B_{1}=\theta(B)  \in Y_{1}(k)$  de coordonn\'ees $(X_{1},\dots,X_{n},T; u)=(0,\dots,0,1;\rho)$.

\begin{lem}\label{Requivsing}
Supposons la forme quadratique $q$ anisotrope sur $k$. 
Si les points $A,B \in W(k)$ sont 
$R$-\'equivalents sur $X$, alors $A_{1}=\theta(A)$ et $B_{1}=\theta(B)$ sont $R$-\'equivalents sur $Y$.
\end{lem}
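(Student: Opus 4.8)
The statement says that $R$-equivalence of $A,B$ on $X_{lisse}$ transfers, via the birational morphism $\theta$, to $R$-equivalence of $A_1=\theta(A)$ and $B_1=\theta(B)$ on $Y$. The naive approach — ``$R$-equivalence is a birational invariant, hence done'' — does \emph{not} immediately apply, because $X_{lisse}$ is only the smooth locus of the singular variety $X$, not a smooth projective variety, and $\theta$ is a birational morphism between non-proper (or singular) schemes rather than an isomorphism. So the plan is to work directly with chains of rational curves. First I would unwind the definition: $A$ and $B$ being $R$-equivalent on $X$ means there is a finite chain of $k$-morphisms $f_i\colon \P^1_k \to X$, each image meeting the next, connecting $A$ to $B$; since $A,B \in W(k)$ lie in the smooth locus, and since $R$-equivalence on a variety with a smooth rational point can be computed using chains passing through the smooth locus, I may assume the connecting points of the chain all lie in $X_{lisse}$ (or at least that the relevant part of the argument only uses such chains). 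The key geometric input to exploit is that the indeterminacy locus of $\theta$, namely $\{u=v=q=0\}$, has been \emph{removed}: $\theta$ is an honest morphism on all of $X_{lisse}$.

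\textbf{Key steps.} The crux is to push a single rational curve $f\colon \P^1_k \to X$ forward along $\theta$. If the image of $f$ is not contained in the singular locus $\{u=v=q=0\}$ of $X$, then $f^{-1}(X_{lisse})$ is a dense open $U \subseteq \P^1_k$, and $\theta \circ (f|_U)\colon U \to Y$ is a morphism from an open subset of $\P^1$ to the \emph{proper} $k$-variety $Y$ (the fibration $Y \to \P^1_k$ is proper since its fibers are quadrics in $\P^n$). By the valuative criterion of properness, this extends uniquely to a morphism $\tilde f\colon \P^1_k \to Y$. Thus each link of an $R$-equivalence chain on $X$ (with image not inside the tiny singular locus) produces a link of a chain on $Y$. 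Then I must check two compatibilities: (a) the endpoints map correctly, i.e. if $f$ passes through $A$ (resp. $B$) in the smooth locus, then $\tilde f$ passes through $A_1=\theta(A)$ (resp. $B_1$) — immediate since $\theta$ is defined at $A$ and $B$; and (b) consecutive links still meet on $Y$: if $f_i$ and $f_{i+1}$ share a point $P$, and $P \in X_{lisse}$, then $\theta(P)$ lies on both $\tilde f_i(\P^1)$ and $\tilde f_{i+1}(\P^1)$, so the chains remain connected. The remaining worry is a link whose image \emph{is} entirely inside $\{u=v=q=0\}$; but that locus is contained in $\{u=v=0\}$, which is where $\theta$ is not a priori defined. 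Here I would use that such a curve, being a conic $q=0$ in a linear $\P^{n-1}$ with $q$ anisotropic, has no $k$-point, hence is not the image of $\P^1_k \to X$ at all (a nonconstant map from $\P^1_k$ would give a $k$-point, and a constant one would give a $k$-point on $q=0$) — so this pathological case cannot occur, and every link of the chain is handled by the valuative-criterion extension above.

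\textbf{Main obstacle.} The step I expect to require the most care is the bookkeeping around the connecting points of the chain and the reduction to chains lying in $X_{lisse}$: a priori an $R$-equivalence chain on the singular projective variety $X$ could route through singular points, and one must argue that since $A, B$ and the whole relevant geometry sit in the smooth locus (and since $X$ is, say, rational hence $R$-connected through its smooth locus, or more elementarily since any two $k$-points of a rationally connected chain can be joined within the smooth locus when such points exist), the chain may be taken to avoid $\{u=v=q=0\}$ except possibly at isolated connecting points, which one then perturbs away. Once that reduction is in place, the pushforward via properness of $Y \to \P^1_k$ is formal. I would also double-check the identification of $A_1, B_1$ as the stated points of $Y_1$, using the explicit formula $(X_1,\dots,X_n,T;U)=(x_1,\dots,x_n,v;u/v)$ for $\theta$ on $X \setminus \{v=0\}$, which contains both $A$ and $B$.
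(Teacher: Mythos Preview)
Your approach is correct and is essentially the paper's: push each link $\sigma_i\colon \P^1_k \to X$ forward along $\theta$, using properness of $Y$ to extend the resulting rational map $\theta\circ\sigma_i$ to a morphism $\P^1_k \to Y$, and check that endpoints and connecting points match.

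The ``main obstacle'' you flag, however, is not an obstacle at all, and the paper dispatches it in one line. Since $q$ is anisotropic, the singular locus $\{u=v=q=0\}$ has \emph{no $k$-point whatsoever}; hence $X_{lisse}(k)=X(k)$. Every connecting point $P_i$ of any $R$-equivalence chain on $X$ is a $k$-point and therefore automatically lies in $X_{lisse}$; and every link $\sigma_i(\P^1_k)$, containing the $k$-point $P_i$, cannot be contained in the singular locus. You already used exactly this anisotropy argument to rule out links landing in $\{u=v=q=0\}$, but did not apply it to the connecting points themselves. Once you do, no perturbation or ``reduction to chains in $X_{lisse}$'' is needed, and the proof is three lines.
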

\begin{proof}
 L'hypoth\`ese sur $q$ garantit que le lieu singulier de $X$, qui est  donn\'e par
 $u=v=q=0$, ne contient aucun $k$-point. On a  donc $X_{lisse}(k)=X(k)$.
 Soient $P$ et $Q$ deux points rationnels de $X(k)$ R-\'equivalents sur $X$.
 Par d\'efinition, il existe une famille  finie de $k$-morphismes  $\sigma_{i} :\P^1_{k} \to X$, $i=1, \dots,m-1$
 avec $\sigma_{i}(0)=P_{i}, \sigma_{i}(\infty)= P_{i+1}$, et $P_{0}=P$ et $P_{m}=Q$.
 Comme $X_{lisse}(k)=X(k)$, les points $P_{i}$ sont tous dans $X_{lisse}$,
 les images des morphismes $\sigma_{i}$ rencontrent $X_{lisse}$.
 La composition avec $\theta$ d\'efinit des applications rationnelles $\rho_{i}$
 de $\P^1_{k}$ vers $Y$, qui puisque $Y$ est projectif, sont en fait des morphismes.
 On a $\rho_{i}(0)= \sigma(P_{i})$ et $\rho_{i}(\infty)= \sigma(P_{i+1})$.
 Ainsi $\theta(P)$ et $\theta(Q)$ sont $R$-\'equivalents sur $Y$.
   \end{proof}

\subsection{Rappel}

Soit $k$ un corps, ${\rm car}.(k) \neq 2$, et soit $\phi$
une $n$-forme de Pfister anisotrope sur $k$.  On sait  (Pfister, voir \cite{lam}) que l'ensemble $N_{\phi}(k) \subset k^*$
des valeurs non nulles de $\phi$ est un sous-groupe de $k^*$.
Soit $Y$ une $k$-vari\'et\'e projective, lisse, g\'eom\'etriquement connexe.
Soit $k(Y)$ son corps des fonctions.  

Dans  \cite{fqmva1,fqmva2}, on a consid\'er\'e le sous-groupe de $k(Y)^*$ form\'e des fonctions qui partout localement sur $Y$
peuvent s'\'ecrire comme le produit d'une unit\'e locale et d'un \'el\'ement de $N_{\phi}(k(Y))$.
On a d\'efini et \'etudi\'e le quotient  $D^{\phi}(Y)$ de ce groupe par le sous-groupe $N_{\phi}(k(Y))$.
 
Il y a en particulier un accouplement
$$ Y(k) \times D^{\phi}(Y) \to D^{\phi}(k)= k^*/N_{\Phi}(k)$$
qui passe au quotient par la $R$-\'equivalence sur $Y(k)$
(\cite[Cor. 4.1.5]{fqmva1},
cons\'equence de la fonctorialit\'e et de l'invariance homotopique des groupes  $D^{\phi}(Y)$).

L'invariance birationnelle des groupes $D^{\phi}(Y)$  fut \'etablie par M. Rost \cite{rost}.

\subsection{Sur un corps de $u$-invariant fini}

On renvoie \`a \cite{lam} pour les propri\'et\'es de base des formes quadratiques
sur les corps.

\begin{theo}
Soit $k$ un corps de caract\'eristique diff\'erente de 2 tel que $I^{n+1}k \neq 0$, i.e.
tel qu'il existe une $(n+1)$-forme de Pfister anisotrope. Soit $F=k((t))$.
Pour tout entier  $m$ avec $2 \leq m \leq 2^n$, il existe une hypersurface cubique  lisse
  $X \subset \P^{m+1}_{F}$ avec un $F$-point
  qui n'est pas r\'etracte rationnelle, en particulier n'est pas stablement rationnelle.
 \end{theo}

\begin{proof}
La condition sur $k$ \'equivaut 
\`a l'existence d'une
  $n$-forme de Pfister $\phi$, de dimension $2^n$ et d'un \'el\'ement $\rho \in k^*$
non repr\'esent\'e par la  forme $\phi$ sur $k$.  Elle implique aussi que le $u$-invariant de $k$ est au moins \'egal \`a $2^{n+1}$.

Soit $q(x_{1}, \dots,x_{m})$, avec $m\geq 2$, une forme quadratique
non d\'eg\'en\'er\'ee sur $k$, de dimension $m \leq 2^n$, qui est une sous-forme de la forme $\phi$ sur $k$. En particulier $q$ est anisotrope.

Soit $Y=Y(q,\rho)$ comme au \S \ref{chatelet}.
On v\'erifie facilement  (cf. \cite[\S 5]{fqmva1}) que la fonction rationnelle $u \in k(W)^*=k(Y)^*$ d\'efinit un \'el\'ement de 
$D^{\phi}(Y)$, et que l'\'evaluation de cet \'el\'ement sur $A_{1}$, resp. $B_{1}$,  est $1 \in k^*/D^{\phi}(k)$, resp. 
$\rho \in k^*/D^{\phi}(k)$. Par hypoth\`ese,  on a $\rho \notin D^{\phi}(k)$.
 Ainsi  les $k$-points $A_{1}$ et $B_{1}$ ne sont pas
$R$-\'equivalents sur $Y$.
(On voit aussi que  l'application naturelle $D^{\phi}(k) \to D^{\phi}(Y)$ n'est pas un isomorphisme.)

Le lemme \ref{Requivsing} garantit alors que les points $A$ et $B$  de $X(k)$ ne sont pas $R$-\'equivalents sur 
l'hypersurface cubique singuli\`ere $X$.

Soit $\Psi(x_{1}, \dots, x_{m}, u,v)=0$ une forme cubique sur le corps $k$ d\'efinissant une 
hypersurface lisse dans $\P^{m+1}_{k}$. 
Soit 
Soit $\mathcal{X} \subset \P^{m+1}_{k[[t]]}$ 
d\'efini par 
$$q(x_{1},\dots,x_{m})v- u(u-v)(u-\rho v)+ t \Psi(x_{1}, \dots, x_{m}, u,v) =0.$$
La fibre sp\'eciale $\mathcal{X}_{0}$ est l'hypersurface cubique  $X$ sur le corps $k$ discut\'ee au~\S \ref{chatelet}.
Elle contient les points $k$-rationnels lisses $A$ et $B$.
Par le lemme de Hensel, le point $A$ se rel\`eve en des $k((t))$-points $\tilde{A}$  de la fibre g\'en\'erique
$\mathcal{X}_{t}$ sur le  corps $F=k((t))$. En outre, l'ensemble de tels   $\tilde{A}$ 
est Zariski dense sur  la $F$-vari\'et\'e $\mathcal{X}_{t}$. On a le m\^{e}me \'enonc\'e  pour 
les relev\'es $\tilde{B}$ de $B$.

Supposons  la $F$-vari\'et\'e  $\mathcal{X}_{t}$    r\'etracte rationnelle. Alors il existe des ouverts non vides $U \subset \mathcal{X}_{t}$ et $ V \subset \P^n_{F}$ et des $F$-morphismes
$s:  U \to V$ et $p : V \to U$ dont le compos\'e $p\circ s$ est l'identit\'e de $U$.
Soient alors $\tilde{A}$ et $\tilde{B}$ des relev\'es de $A$ et $B$ dans $U$. 
Il existe une droite $\P^1_{F} \subset \P^n_{F}$ contenant $s(\tilde{A})$ et 
$s(\tilde{B})$, et qui donc rencontre $V$.
En composant avec $p : V \to U \subset \mathcal{X}_{t}$, puisque $\mathcal{X}_{t}$ est propre et que 
$\P^1_{F} $ est une courbe r\'eguli\`ere,
 on obtient un $F$-morphisme
$\P^1_{F} \to \mathcal{X}_{t}$ tel que $\tilde{A}$ et $\tilde{B}$  soient dans l'image
$\P^1(F)$. Ainsi  $\tilde{A}$ et $\tilde{B}$ sont R-\'equivalents sur la $F$-vari\'et\'e 
$\mathcal{X}_{t}$. On sait \cite{mad1} 
que la R-\'equivalence se sp\'ecialise (ceci vaut pour tout $k[[t]]$-sch\'ema propre): 
l'application $$\mathcal{X}_{t}(F) = {\mathcal X}(k[[t]]) \to X_{0}(k)=X(k)$$ induit une application
$\mathcal{X}_{t}(F)/R \to X(k)/R$. Mais $A$ et $B$ ne sont pas R-\'equivalents sur $X$.
On conclut  (voir l'introduction de l'article) que la $F$-hypersurface cubique lisse $\mathcal{X}_{t}$ n'est pas r\'etracte rationnelle.
 \end{proof}
 
 \begin{rema}
 On peut envisager diverses variantes de la d\'emonstration.

 (1) Tout en utilisant l'argument de  sp\'ecialisation de la R-\'equivalence comme ci-desssus,
 on peut remplacer les groupes $D^{\phi}(Y)$ par la cohomologie non ramifi\'ee.
  Soit $\phi = <1,-a_{1}> \otimes  \dots \otimes <1,-a_{n}>$. Le cup-produit $$(a_{1}) \cup \dots (a_{n}) \cup (u) \in H^{n+1}(k(Y),\Z/2),$$
o\`u pour $b \in k(Y)^*$ on note $(b) \in k(Y)^*/k(Y)^{*2}=H^1(k(Y),\Z/2)$   la classe de $b$, est une classe non ramifi\'ee qui est non constante car elle prend des valeurs distinctes dans $H^{n+1}(k,\Z/2)$.
Ceci utilise  le th\'eor\`eme  (\cite{OVV}) 
qu'une $(n+1)$-forme de Pfister non triviale sur le corps $k$ a une image non nulle dans  $H^{n+1}(k,\Z/2)$.

\medskip

(2)  On peut envisager une m\'ethode qui ignore la R-\'equivalence, et utilise seulement l'\'equivalence rationnelle sur les z\'ero-cycles.

Supposons que l'on  ait :

(*) Il existe un $k$-morphisme de d\'esingularisation $f : X' \to X$  qui est   universellement $CH_{0}$-trivial.

Si la $F$-vari\'et\'e  $\mathcal{X}_{t}$ est r\'etracte rationnelle, alors
elle est universellement $CH_{0}$-triviale \cite[Lemme 1.3]{CTP}.
D'apr\`es \cite[Th\'eor\`eme 1.12]{CTP}, sous l'hypoth\`ese (*), ceci implique   que la $k$-vari\'et\'e $X'$ est universellement $CH_{0}$-triviale. 
Ceci implique alors \cite{merkurjev2} que les groupes de cohomologie non ramifi\'ee \`a coefficients $\Z/2$ de $X'$, qui sont des invariants $k$-birationnels des $k$-vari\'et\'es projectives et lisses,
sont r\'eduits \`a la cohomologie du corps de base. 
Mais on sait (via le mod\`ele $Y$) que ce n'est pas le cas,
car on a une classe de cohomologie non ramifi\'ee qui prend des
valeurs distinctes en deux points -- ce dernier point utilisant
comme ci-dessus \cite{OVV}. 
\end{rema}

\section{Hypersurfaces cubiques diagonales et cohomologie non ramifi\'ee}\label{diagonal}

\begin{theo}\label{theodiagonal}
Soit $k$ un corps de caract\'eristique diff\'erente de 3, poss\'edant un \'el\'ement $a$
qui n'est pas un cube. Soient $0 \leq n \leq m$ des entiers.
Soit $F$ un corps   avec
$$k(\lambda_{1}, \dots, \lambda_{m}) \subset F \subset F_{m}:=k((\lambda_{1})) \dots ((\lambda_{m})).$$
L'hypersurface cubique      $X:= X_{n,F}$ de $\P^{n+3}_{F}$ d\'efinie par l'\'equation
$$ x^3+y^3+z^3+aw^3+ \sum_{i=1}^n \lambda_{i} t_{i}^3=0$$
 poss\`ede un point rationnel et 
n'est pas
 universellement $CH_{0}$-triviale,  en particulier elle  n'est pas r\'etracte rationnelle.
\end{theo}

\begin{proof} 
Pour \'etablir le r\'esultat, on peut supposer que $k$ contient une racine cubique primitive de l'unit\'e, soit $j$,
et que $F=F_{m}$.
Le lemme  \ref{k((t))}  ci-dessous permet de supposer $n=m$.
On fixe un isomorphisme $\Z/3 = \mu_{3}$ et on consid\`ere la cohomologie \'etale 
\`a coefficients $\Z/3$.  On ignore les torsions \`a la Tate dans les notations.
Etant donn\'es un corps $L$ contenant $k$ et des \'el\'ements $b_{i}, i=1, \dots, s,$ de $L^*$,
on note 
$(b_{1}, \dots, b_{s}) \in H^s(L,\Z/3)$ le cup-produit, en cohomologie galoisienne,
des classes $(b_{i}) \in L^*/L^{*3}=H^1(L,\Z/3)$.

 On va d\'emontrer par
r\'ecurrence sur $n\neq 0$  l'assertion suivante, qui implique la proposition.

($A_{n}$) 
 Soient $k$, $a$, $F_{n}$ et $X_{n}/F_{n}$ comme ci-dessus.   
Le cup-produit
$$\alpha_{n}: = ((x+jy)/(x+y), a, \lambda_{1},\dots,\lambda_{n}) \in H^{n+2}(F_{n}(X_{n}),\Z/3)$$
d\'efinit une classe de cohomologie non ramifi\'ee  (par rapport au corps de base $F_{n}$) qui ne provient pas
d'une classe dans $H^{n+2}(F_{n},\Z/3)$.

Le cas $n=0$ est connu (\cite[Chap. VI, \S 5]{manin} \cite[\S 2.5.1]{CTS}).
Supposons l'assertion d\'emontr\'ee pour $n$.

La classe $\alpha_{n+1}$ sur la $F_{n+1}$-hypersurface $X_{n+1} \subset \P^{n+4}_{F_{n+1}}$ a ses r\'esidus 
triviaux en dehors des diviseurs d\'efinis par $x+y=0$ et $x+jy=0$.
Soit $\Delta \subset X_{n+1}$ le diviseur $x+y=0$. Ce diviseur est d\'efini par les
\'equations 
$$x+y=0, z^3+aw^3+ \sum_{i=1}^{n+1} \lambda_{i} t_{i}^3=0.$$
Le r\'esidu  de $\alpha_{n+1}$ au point g\'en\'erique de $\Delta$ est
 $$\partial_{\Delta}(\alpha_{n+1}) = 
  \pm (a, \lambda_{1},\dots,\lambda_{n+1}) \in H^{n+2}(F_{n+1}(\Delta),\Z/3).$$
 Mais dans le corps des fonctions de $\Delta$, on a 
$$1+a(w/z)^3+ \sum_{i=1}^{n+1} \lambda_{i} (t_{i}/z)^3=0$$
et 
cette \'egalit\'e implique (cf. \cite[Lemma 1.3]{milnor})  :
$$(a,\lambda_{1}, \dots, \lambda_{n+1}) = 0 \in  H^{n+2}(F_{n+1}(\Delta),\Z/3).$$
Le m\^{e}me argument s'applique pour le diviseur
d\'efini par  $x+jy=0$. Ainsi $\alpha_{n+1}$ est une classe de cohomologie non
ramifi\'ee sur la $F_{n+1}$-hypersurface $X_{n+1}$.

Soit   $\mathcal{X}_{n+1}$ le $F_{n}[[\lambda_{n+1}]]$-sch\'ema d\'efini par
$$ x^3+y^3+z^3+aw^3+ \sum_{i=1}^{n+1} \lambda_{i} t_{i}^3=0.$$
Le diviseur $Z$ d\'efini par $\lambda_{n+1}=0$ sur $\mathcal{X}$
est le c\^{o}ne d'\'equation
$$ x^3+y^3+z^3+aw^3+ \sum_{i=1}^n \lambda_{i} t_{i}^3=0$$
dans $\P^{n+4}_{F_{n}}$, c\^one qui est birationnel au produit de $\P^1_{F_{n}}$
et de l'hypersurface cubique lisse  $X_{n} \subset \P_{F_{n}}^{n+3}$ d\'efinie par
$$ x^3+y^3+z^3+aw^3+ \sum_{i=1}^n \lambda_{i} t_{i}^3=0.$$
 Le corps des fonctions rationnelles de $\mathcal{X}_{n+1}$ est
$F_{n+1}(X_{n+1})$.

On a  $$\partial_{Z}(\alpha_{n+1} ) =   \pm  ((x+jy)/(x+y), a, \lambda_{1},\dots,\lambda_{n})  \in H^{n+2}(F_{n}(Z), \Z/3).$$

Par l'hypoth\`ese de r\'ecurrence 
$$ ((x+jy)/(x+y), a, \lambda_{1},\dots,\lambda_{n}) \in H^{n+2}(F_{n}(X_{n}), \Z/3)$$
n'est pas dans l'image de $ H^{n+2}(F_{n},\Z/3)$. Ceci implique que
$$ ((x+jy)/(x+y), a, \lambda_{1},\dots,\lambda_{n}) \in H^{n+2}(F_{n}(Z)), \Z/3)$$
n'est pas dans l'image de $H^{n+2}(F_{n},\Z/3)$.
Du  diagramme commutatif 
$$\begin{array}{ccccccccc}
 \partial_{Z} : & H^{n+3}(F_{n+1}(X),\Z/3)  & \to & H^{n+2}(F_{n}(Z), \Z/3)\\
 & \uparrow & &  \uparrow \\
 \partial_{\lambda_{n+1}=0} : & H^{n+3}(F_{n+1},\Z/3) & \to  &H^{n+2}(F_{n},\Z/3)
 \end{array}$$
on conclut que 
$$\alpha_{n+1}: = ((x+jy)/(x+y), a, \lambda_{1},\dots,\lambda_{n+1}) \in H^{n+3}(F_{n+1}(X),\Z/3)$$
n'est pas dans l'image de $H^{n+3}(F_{n+1},\Z/3)$.

Ceci \'etablit $(A_{n})$ pour tout entier $n$ et
 implique (cf. \cite{merkurjev2}) que la  $F_{n}$-vari\'et\'e $X_{n}$ n'est pas universellement $CH_{0}$-triviale  et   n'est pas r\'etracte rationnelle.
\end{proof}

\begin{lem}\label{k((t))}
Soit $F$ un corps. Si  une $F$-vari\'et\'e $X$
projective, lisse, g\'eom\'etriquement connexe n'est pas universellement $CH_{0}$-triviale, alors la $F((t))$-vari\'et\'e 
$X\times_{F}F((t))$ n'est pas universellement $CH_{0}$-triviale, et donc n'est pas r\'etracte rationnelle.
\end{lem}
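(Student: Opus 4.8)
Le point de départ est que la propriété « être universellement $CH_0$-trivial » se teste via les groupes de Chow des points fermés après extension quelconque du corps de base : il s'agit de montrer que si $X\times_F F((t))$ est universellement $CH_0$-triviale, alors $X$ l'est aussi. On procède donc par contraposée. Soit $L$ une extension de $F$ telle que $\deg_L : CH_0(X_L)\to\Z$ ne soit pas un isomorphisme. Comme $X$ possède un $F$-point (ou du moins un zéro-cycle de degré $1$ après des extensions appropriées — en toute rigueur la non-trivialité universelle signifie qu'il existe $L/F$ et une classe de $A_0(X_L)$ non nulle, ou bien que le degré n'est pas surjectif, mais ce dernier cas est exclu dès qu'il y a un point rationnel et ici ce sera le cas), l'obstruction vit dans $A_0(X_L)=\Ker(\deg_L)$, et l'on dispose d'un zéro-cycle $z$ sur $X_L$ de degré $0$ qui n'est pas rationnellement équivalent à zéro.

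La première étape est de transporter $L$ en une extension $L'$ de $F((t))$. On pose $L' = L((t))$, qui contient $F((t))$ et dont le corps résiduel pour la valuation $t$-adique est $L$. L'anneau de valuation discrète $R=L[[t]]$ est hensélien (même complet), de corps des fractions $L'$ et de corps résiduel $L$. La $R$-variété $\sX = X_L\times_L R$ est propre et lisse, de fibre générique $X_{L'}$ et de fibre spéciale $X_L$. La deuxième étape consiste à invoquer le fait que l'équivalence rationnelle des zéro-cycles se spécialise pour un schéma propre sur un anneau de valuation discrète hensélien : l'application de spécialisation $CH_0(X_{L'})\to CH_0(X_L)$ est bien définie, et comme $\sX$ est lisse sur $R$ tout zéro-cycle de la fibre spéciale se relève en un zéro-cycle de la fibre générique de même degré (relèvement des points fermés via Hensel, ce qui vaut même sur corps non parfait, exactement comme dans la preuve du lemme \ref{specred}). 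Donc $z\in A_0(X_L)$ se relève en $\tilde z\in A_0(X_{L'})$, dont l'image par spécialisation est $z$. Si $X_{L'}=X_{F((t))}\times_{F((t))}L'$ était universellement $CH_0$-triviale, on aurait $\tilde z=0$ dans $CH_0(X_{L'})$, donc $z=0$ dans $CH_0(X_L)$ par compatibilité de la spécialisation à l'équivalence rationnelle, contradiction.

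La dernière implication (« donc n'est pas rétracte rationnelle ») est immédiate : c'est l'une des implications rappelées dans l'introduction (toute variété rétracte rationnelle est universellement $CH_0$-triviale, cf. \cite[Lemme 1.3]{CTP}). Le seul point méritant un peu d'attention — et donc l'obstacle principal — est de vérifier soigneusement que l'application de spécialisation sur les groupes $CH_0$ est disponible dans ce cadre (anneau de valuation discrète, schéma seulement propre) et qu'elle est bien compatible au relèvement des zéro-cycles supportés dans le lieu lisse ; mais c'est précisément le contenu de \cite[Prop. 2.6]{fulton} combiné au relèvement hensélien déjà utilisé ci-dessus. Il n'y a en fait rien de plus que ce qui a déjà servi dans la démonstration du lemme \ref{specred}, appliqué au cas trivial où la fibre spéciale est irréductible et égale à $X$ lui-même.
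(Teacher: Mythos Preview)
Your proof is correct and follows essentially the same approach as the paper's: both argue by contrapositive, choosing a field $L\supset F$ witnessing non-triviality of $CH_0(X_L)$, passing to $L'=L((t))$ as an extension of $F((t))$, and using the surjective, degree-preserving specialisation map $CH_0(X_{L((t))})\to CH_0(X_L)$ (\cite[Prop.~2.6]{fulton}) to deduce that $CH_0(X_{L'})\to\Z$ fails to be an isomorphism. The paper's write-up is simply more terse, and your parenthetical concern about the case where the degree map is not surjective is unnecessary---surjectivity and degree-preservation of the specialisation map handle both kernel and cokernel simultaneously, without any need for a rational point on $X$.
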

\begin{proof}
Sur tout corps $L$ contenant $F$, on dispose de l'application de sp\'ecialisation
$CH_{0}(X_{L((t))}) \to CH_{0}(X_{L})$, et cette application est surjective et respecte
le degr\'e.
\end{proof}

\begin{rema}
Il serait int\'eressant de comprendre la g\'en\'eralit\'e de la construction faite dans le
th\'eor\`eme \ref{theodiagonal}. On utilise une classe de cohomologie non ramifi\'ee non constante sur un mod\`ele
birationnel de la fibre sp\'eciale d'une $k[[t]]$-sch\'ema propre \`a fibres int\`egres, et on en tire une classe de cohomologie non ramifi\'ee non constante
de degr\'e un de plus sur la fibre g\'en\'erique sur $k((t))$, essentiellement par cup-produit avec la classe d'une uniformisante
de l'anneau $k[[t]]$.
\end{rema}

On laisse au lecteur le soin d'\'etablir l'analogue suivant du th\'eor\`eme \ref{theodiagonal}.

\begin{theo}\label{theodiagonalsurpadique}
Soient $p \neq 3$ un nombre premier et $k$ un corps $p$-adique dont le corps r\'esiduel
contient les racines cubiques primitives de 1. Soit $a \in k^*$ une unit\'e qui n'est pas un cube.
Soit $\pi$ une uniformisante de $k$. Soient $0 \leq n \leq m$ des entiers.
Soit $F$ un corps   avec
$$\Q(a) (\lambda_{1}, \dots, \lambda_{m}) \subset F \subset k((\lambda_{1})) \dots ((\lambda_{m})).$$
L'hypersurface cubique      $X_{n}$ de $\P^{n+4}_{F}$ d\'efinie par l'\'equation
$$ x^3+y^3+z^3+aw^3+ \pi t^3+  \sum_{i=1}^n \lambda_{i} t_{i}^3=0,$$
qui poss\`ede un point rationnel,
n'est pas
 universellement $CH_{0}$-triviale et  donc n'est pas r\'etracte rationnelle.
\end{theo}

\bigskip

{\it Exemples}

 En appliquant le th\'eor\`eme \ref{theodiagonal},
on trouve  $X_{n} \subset \P^{n+3}_{F}$ non r\'etracte rationnelle
avec 
$$k(\lambda_{1}, \dots, \lambda_{n}) \subset F \subset k((\lambda_{1})) \dots ((\lambda_{n}))$$
dans les situations suivantes.

(i) Le corps $k=\F$ 
est un corps fini 
de caract\'eristique diff\'erente de 3
contenant les racines cubiques de 1.

(ii) Le  corps $k$, de caract\'eristique diff\'erente de 3, poss\`ede une valuation discr\`ete,
par exemple $k$ est le  corps des fonctions d'une vari\'et\'e complexe de dimension au moins 1, ou est un corps $p$-adique, ou est un corps de nombres.  

  On trouve ainsi des hypersurfaces cubiques lisses non r\'etractes rationnelles
dans $\P_{\C(x_{1},\dots,x_{m})}^{n}$, avec un point rationnel, pour tout  entier $n$ avec $3 \leq n \leq m+2$.

En appliquant le th\'eor\`eme \ref{theodiagonalsurpadique}, 
sur un corps $k$ $p$-adique ($p\neq 3$) contenant une racine cubique de $1$,    on trouve
des hypersurfaces cubiques lisses non r\'etractes rationnelles
dans 
$\P_{k(x_{1},\dots,x_{m})  }^{n}$, avec un point rationnel, pour tout  entier $n$  avec
$4 \leq n \leq m+4$.

\section{Comparaison des r\'esultats obtenus par les diverses m\'ethodes}\label{comparaison}

Les  hypoth\`eses faites sur le corps $k$ dans chacun des trois derniers paragraphes diff\`erent.
Pour comparer la qualit\'e des r\'esultats qu'ils produisent, on consid\`ere
la situation sur le corps $E_{n} : = \C((t_{1}))\dots ((t_{n}))$. Comme on va voir,
aucune des trois m\'ethodes ne donne des r\'esultats enti\`erement  couverts par les deux autres.

En outre  les m\'ethodes des \S  \ref{reel}, \ref{ChatziLevine}
 et \ref{viadphi} donnent des hypersurfaces cubiques avec un indice de torsion  (comme d\'efini dans  \cite{CL}) \'egal \`a 2
 et celle du \S \ref{diagonal} donne  des exemples avec un indice de torsion \'egal \`a 3.

\medskip

La m\'ethode du    \S \ref{ChatziLevine} (\ChaL) fournit  des hypersurfaces cubiques  $X \subset \P^{N}_{E_{n}}$,
avec un point rationnel, non r\'etractes rationnelles, 
pour $N$ entier avec $N\geq 3$ de la forme $N=2^{\ell} \leq 2^{n-1}$.

La m\'ethode du \S \ref{viadphi}  fournit  des hypersurfaces cubiques  $X \subset \P^{N}_{E_{n}}$,
avec un point rationnel, non r\'etractes rationnelles, 
pour tout $N$ entier avec $3 \leq N \leq  2^{n-2}+1$.

La m\'ethode du \S \ref{diagonal} fournit des hypersurfaces cubiques  $X \subset \P^{N}_{E_{n}}$,
avec un point rationnel, non r\'etractes rationnelles, 
pour tout $N$ entier avec $3 \leq N \leq  n+2$.

\medskip

On obtient ainsi des exemples de telles hypersurfaces cubiques $X \subset \P^{N}_{E_{n}}$
pour les valeurs suivantes de $N$.

Pour $n=1$, les m\'ethodes classiques donnent des exemples non universellement $CH_{0}$-triviaux
avec $N=3$.
On a donc de tels exemples avec $N=3$ pour tout $n \geq 1$.

Pour $n=2$, on a d\'ej\`a $N=3$.
 La  m\'ethode du \S  \ref{diagonal} donne  des exemples non universellement $CH_{0}$-triviaux avec $3 \leq N  \leq 4$.
Un exemple  avec $N=4$ avait \'et\'e obtenu par Madore \cite{madore}, qui a montr\'e que pour l'hypersurface cubique $X$  d'\'equation
$$ x^3+y^3+\lambda z^3+ \mu u^3 + \lambda \mu v^3=0$$
sur le corps $\C((\lambda))((\mu))$ le groupe de Chow r\'eduit $A_{0}(X)$ n'est pas nul.

On a donc de tels exemples avec $N=4$ pour tout $n \geq 2$.

Pour $n=3$, on a d\'ej\`a $N=3, 4$.
 La m\'ethode du \S \ref{ChatziLevine} donne  des exemples non universellement $CH_{0}$-triviaux pour $N=4$, 
celle du \S  \ref{viadphi}  donne des exemples non r\'etractes rationnels avec $N=3$,
celle du \S \ref{diagonal}  donne  des exemples non universellement $CH_{0}$-triviaux pour
$N \leq 5$. Le cas $N=5$ est nouveau.
On a donc de tels exemples avec $N=5$ pour tout $n \geq 3$.

Pour $n=4$, on a d\'ej\`a $N=3,4,5$.
La m\'ethode du \S \ref{ChatziLevine} donne des exemples non universellement $CH_{0}$-triviaux pour $N=4$ 
 et $N=8$ (nouveau cas)
  celle du \S \ref{viadphi} donne des exemples non r\'etractes rationnels avec
$N \leq 5$, cas d\'ej\`a obtenu, 
celle du \S \ref{diagonal} donne des exemples non universellement $CH_{0}$-triviaux avec $N \leq 6$. 
Le cas $N=6$ est nouveau.
La situation reste ouverte pour $N=7$ et $N > 8$.
On a donc de tels exemples avec $N=6, 8$ pour tout $n \geq 4$.

A partir de $n=5$, on a $n+2 \leq 2^{n-2}+1$.
 On obtient des exemples non r\'etractes rationnels avec $N \leq  2^{n-2}+1$ par la m\'ethode 
  du \S \ref{viadphi}  et  des exemples non universellement $CH_{0}$-triviaux $N=2^{\ell} \leq 2^{n-1}$ par la m\'ethode du \S \ref{ChatziLevine}.

\medskip

Soit maintenant $k$ un corps $p$-adique et $F=k((t_{1})) \dots ((t_{n}))$. 
On obtient $X \subset \P^N_{F}$, $N \geq 3$,  hypersurface cubique lisse avec un point rationnel et non r\'etracte rationnelle pour
$N \geq 3$ satisfaisant l'une des conditions suivantes :

$N=2^{\ell} \leq 2^{n+1}$,  par la m\'ethode du \S \ref{ChatziLevine} (\ChaL);

$N \leq 2^n + 1$ , par la m\'ethode  du \S \ref{viadphi} ;

 $N \leq n+4$,
pour $p\neq 3$ et $k$ contenant les racines cubiques de 1  par la m\'ethode du \S 4 (Th\'eor\`eme \ref{theodiagonalsurpadique}).

Le cas des hypersurfaces cubiques lisses dans $\P^4_{k}$
sur $k$ un corps $p$-adique quelconque \cite[Th\'eor\`eme 1.19]{CTP} n'est  pas couvert par les
r\'esultats du pr\'esent article.

 \medskip
 
Question : Pour un corps $F$ donn\'e, l'ensemble des entiers $n\geq 3$ tels qu'il existe une
hypersurface cubique lisse dans $\P^n_{F}$  avec un $F$-point et non universellement $CH_{0}$-triviale est-il
un intervalle dans les entiers ?

\bigskip

{\it Remerciements.}  L'expos\'e de Marc Levine au Colloque International de K-Th\'eorie \`a Mumbai,
en janvier 2016,
et l'article de  Chatzistamatiou et Levine \cite{CL} m'ont amen\'e \`a ce travail. 
Le contenu du \S 4 a \'et\'e trouv\'e \`a l'occasion de la rencontre EDGE 2016 \`a Edimbourg (juin 2016), rencontre o\`u j'ai expos\'e les r\'esultats de l'article.
Je remercie Alena Pirutka  pour diverses remarques.
Je remercie  l'IRSES Moduli et l'Institut Tata (Mumbai)  pour leur soutien \`a l'occasion du colloque de Mumbai.
Ce travail a b\'en\'efici\'e d'une aide de l'Agence Nationale de la Recherche portant la r\'ef\'erence ANR-12-BL01-0005.

\end{document}